\newtheorem{theorem}{Theorem}[section]
\newtheorem{proposition}[theorem]{Proposition}
\newtheorem{lemma}[theorem]{Lemma}
\newtheorem{corollary}[theorem]{Corollary}
\theoremstyle{definition}
\newtheorem{remark}[theorem]{Remark}
\newcommand{\wbar}[1]{\overline{#1}}
\newcommand{\what}[1]{\widehat{#1}}
\newcommand{\wtil}[1]{\widetilde{#1}}
\newcommand{\id}{\operatorname{id}}
\newcommand{\supp}{\operatorname{supp}}
\newcommand{\fA}{\mathcal{A}}
\newcommand{\fB}{\mathcal{B}}
\newcommand{\fC}{\mathcal{C}}
\newcommand{\fH}{\mathcal{H}}
\newcommand{\fK}{\mathcal{K}}
\newcommand{\fW}{\mathcal{W}}
\newcommand{\fV}{\mathcal{V}}
\newcommand{\Ree}{\mathbb{R}}
\newcommand{\En}{\mathbb{N}}
\newcommand{\alp}{\alpha}
\newcommand{\Del}{\Delta}
\newcommand{\lam}{\lambda}
\newcommand{\sig}{\sigma}
\newcommand{\cb}{\mathrm{cb}}
\newcommand{\bl}{\mathrm{L}}
\newcommand{\fal}{\mathrm{A}}
\newcommand{\meas}{\mathrm{M}}
\newcommand{\vn}{\mathrm{VN}}
\newcommand{\ideal}{\mathrm{I}}
\newcommand{\ball}{\mathrm{b}}
\newcommand{\mat}{\mathrm{M}}
\begin{document}

\title[Corrigendum]
{Corrigendum:  Similarity degree of Fourier algebras}

\author{Hun Hee Lee, Ebrahim Samei and Nico Spronk}

\maketitle

\begin{abstract}
We address two errors made in our paper \cite{leess}.  The most significant error is in Theorem 1.1.
We repair this error, and show that the main result, Theroem 2.5 of \cite{leess}, is true.
The second error is in one of our examples, Remark 2.4 (iv), and we partially resolve it.
\end{abstract}

\footnote{{\it Date}: \today.

2000 {\it Mathematics Subject Classification.} Primary 46K10;
Secondary 43A30, 46L07, 43A07.
{\it Key words and phrases.} Fourier algebra, similarity degree, amenable.

The first named author would like to thank Seoul National University through the Research Resettlement Fund for the new faculty and the Basic Science Research Program through the National Research Foundation of Korea (NRF), grant  NRF-2015R1A2A2A01006882.
The second named author would like to thank NSERC Grant 409364-2015.
The third named author would like thank NSERC Grant 312515-2015.}

\section{On Theorem 1.1 of \cite{leess}}  

\subsection{A corrected version of the threorem}
We begin with a simple observation whose straightforward proof we omit.

\begin{lemma}\label{lem:embedding}
Let $\fV$ be a an operator space and $\fV_0$ and $\fW$ be subspaces with $\fV_0$ dense in $\fV$,
$\fV_0\cap\fW$ dense in $\fW$, and $\fW$ closed in $\fV$.  Then the map
\[
v+\fV_0\cap\fW\mapsto v+\fW:\fV_0/(\fV_0\cap\fW)\to\fV/\fW
\]
is a complete isometry.  Hence we identify $\fV_0/(\fV_0\cap\fW)$ as a subspace of $\fV/\fW$ and, for any $n \in \En$,
the quotient map takes the matricial open unit ball $\ball_1(\mat_n\otimes\fV_0)$ onto 
$\ball_1(\mat_n\otimes[\fV_0/(\fV_0\cap\fW)])\cong\ball_1([\mat_n\otimes \fV_0]/[\mat_n\otimes (\fV_0\cap\fW)])$.
\end{lemma}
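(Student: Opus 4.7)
My plan is to verify in turn: (a) well-definedness and injectivity of the quotient map, (b) the complete isometry at each matrix level, and (c) the open unit ball assertion.

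Well-definedness and injectivity follow immediately: any $v \in \fV_0$ lying in $\fW$ belongs to $\fV_0 \cap \fW$, and cosets of the smaller equivalence relation sit inside cosets of the larger one. The contractive direction of the norm equality at matrix level $n$ is also automatic, since $\mat_n \otimes (\fV_0 \cap \fW) \subseteq \mat_n \otimes \fW$ makes the infimum defining the source quotient norm at least the infimum defining the target quotient norm.

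The main step is the reverse inequality. The essential input is that density of $\fV_0 \cap \fW$ in $\fW$ lifts, entry-by-entry, to density of $\mat_n \otimes (\fV_0 \cap \fW)$ in $\mat_n \otimes \fW$: given $w = \sum_{i,j} e_{ij} \otimes w_{ij}$ with $w_{ij} \in \fW$, approximate each $w_{ij}$ by some $w_{ij}' \in \fV_0 \cap \fW$ and assemble $w'$ accordingly. Then, given $v \in \mat_n \otimes \fV_0$ and $\eps > 0$, pick $w \in \mat_n \otimes \fW$ with $\norm{v - w}$ within $\eps$ of the quotient norm in $\mat_n \otimes [\fV/\fW]$, and pick $w' \in \mat_n \otimes (\fV_0 \cap \fW)$ with $\norm{w - w'} < \eps$. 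The triangle inequality $\norm{v - w'} \leq \norm{v - w} + \eps$ then yields the reverse bound after letting $\eps \to 0$.

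For the open unit ball statement, once the complete isometry is in hand, the matrix norm on $\fV_0/(\fV_0 \cap \fW)$ at level $n$ coincides with the quotient norm on $[\mat_n \otimes \fV_0]/[\mat_n \otimes (\fV_0 \cap \fW)]$ (the standard operator space identification $\mat_n[X/Y] \cong \mat_n[X]/\mat_n[Y]$). Surjectivity of the canonical quotient map $\mat_n \otimes \fV_0 \to \mat_n \otimes [\fV_0/(\fV_0\cap\fW)]$ on open unit balls is then immediate from the definition of the quotient norm as an infimum: any coset of norm strictly less than $1$ has a representative of norm strictly less than $1$. The one subtlety to keep in mind is that $\fV_0$ need not be complete, so everything is taken algebraically, with the density hypotheses substituting for completion.
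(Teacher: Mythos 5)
Your proof is correct. The paper itself gives no argument here---it explicitly omits the proof as ``straightforward''---and your three steps (well-definedness and the easy contractive inequality from $\mat_n\otimes(\fV_0\cap\fW)\subseteq\mat_n\otimes\fW$; the reverse inequality via entrywise lifting of density to matrix levels plus an $\eps$-perturbation; and the open-ball surjectivity from the quotient norm being an infimum) are exactly the standard argument the authors had in mind, including the correct observation that completeness of $\fV_0$ is never needed.
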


We now recall the notation of \cite{leess}.  For a Banach algebra and 
operator space $\fA$, $c\geq 1$, $\wtil{\fA}_c$ (contained in $\fB(\fH_c)$)
is the universal operator algebra generated by representations  
on Hilbert spaces $\pi:\fA\to\fB(\fH)$ with completely bounded norm $\|\pi\|_{\cb}\leq c$, and $\iota_c:\fA\to\wtil{\fA}_c$ is
the canonical embedding.  Note that {\it we assume that $\iota_1$ is injective.}  
We say that $\fA$ satisfies the {\it similarity property} for completely bounded homomorphims
if for each completely bounded  homomorphim $\pi:\fA\to\fB(\fH)$, there is an invertible $S$ in $\fB(\fH)$
for which $\|S\pi(\cdot)S^{-1}\|_{\cb}\leq 1$.
We also consider the ``weighted multiplication" 
map on the $N$-fold Haagerup tensor product of $\fA$ with itself, $m_{N,c}:\fA^{N\otimes^h}\to\wtil{\fA}_c$, given 
on elementary tensor by
\[
m_{N,c}(u_1\otimes\dots\otimes u_N)
=\frac{1}{c^N}\iota_c(u_1)\dots\iota_c(u_n)=\frac{1}{c^N}\iota_c(u_1,\dots u_N)
\]
which is a complete contraction.

We let $m_N:\fA^{\otimes N}\to\fA$ be the multiplication map on the $N$-fold algebraic tensor product and
$\fA^N=m_N(\fA^{\otimes N})$.  We say that $\fA$ is {\it square-dense} provided
the closure, $\wbar{\fA^2}$, is all of $\fA$.  This clearly implies that $\wbar{\fA^N}=\fA$ for all $N\geq 2$.

\begin{theorem} \label{theo:mainrepair}
Suppose that 
$m_{N,1}:\fA^{N\otimes^h}\to\wtil{\fA}_1$ is a complete
surjection, i.e.\ the induced map $\mu_{N,1}:\fA^{N\otimes^h}/\ker m_{N,1}\to\wtil{\fA}_1$ is a complete 
isomorphism which is
completely bounded below by $1/K$, and either of the following conditions holds:

{\bf (i)} $\fA$ is square-dense and $\ker m_N$ is dense in  $\ker m_{N,1}$; or

{\bf (ii)} $\fA$ satisfies the similarity property for completely bounded homomorphisms.

\noindent Then any completely bounded homomorphism
$\pi:\fA\to\fB(\fH)$ admits an invertible operator $S$ in $\fB(\fH)$ for which
\begin{equation}\label{eq:simdeg}
\|S\pi(\cdot)S^{-1}\|_{\cb}\leq 1\text{ and }\|S\|\|S^{-1}\|\leq K\|\pi\|_{\cb}^N.
\end{equation}
\end{theorem}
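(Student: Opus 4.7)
The plan is to produce a completely bounded homomorphism $\bar\pi:\wtil{\fA}_1\to\fB(\fH)$ with $\bar\pi\circ\iota_1=\pi$ and $\|\bar\pi\|_{\cb}\leq K\|\pi\|_{\cb}^N$, and then invoke the Paulsen--Haagerup similarity theorem for completely bounded homomorphisms of operator algebras. Setting $c=\|\pi\|_{\cb}$, I first form the completely bounded map $\hat\pi:\fA^{N\otimes^h}\to\fB(\fH)$ determined by $\hat\pi(u_1\otimes\cdots\otimes u_N)=\pi(u_1)\cdots\pi(u_N)$; multiplicativity of the Haagerup tensor norm gives $\|\hat\pi\|_{\cb}\leq c^N$.

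The pivotal step is the inclusion $\ker m_{N,1}\subseteq\ker\hat\pi$, and this is where the two hypotheses play different roles. Under (i), the restriction of $\hat\pi$ to the algebraic tensor product equals $\pi\circ m_N$, so $\ker m_N\subseteq\ker\hat\pi$; continuity of $\hat\pi$ on $\fA^{N\otimes^h}$ combined with the density of $\ker m_N$ in $\ker m_{N,1}$ then yields the inclusion by closure. Under (ii), the similarity property furnishes an invertible $T\in\fB(\fH)$ for which $\rho:=T\pi(\cdot)T^{-1}$ is a complete contraction; the universal property of $\iota_1$ produces a complete contraction $\tilde\rho:\wtil{\fA}_1\to\fB(\fH)$ with $\rho=\tilde\rho\circ\iota_1$, and a direct computation on elementary tensors yields $\hat\pi=\bigl(T^{-1}\tilde\rho(\cdot)T\bigr)\circ m_{N,1}$, so the kernel inclusion again holds.

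Granted the inclusion, $\hat\pi$ descends to the quotient $\fA^{N\otimes^h}/\ker m_{N,1}$, and composition with $\mu_{N,1}^{-1}$ (whose cb-norm is at most $K$) defines $\bar\pi:\wtil{\fA}_1\to\fB(\fH)$ with $\bar\pi\circ m_{N,1}=\hat\pi$ and $\|\bar\pi\|_{\cb}\leq Kc^N$. I next verify $\bar\pi\circ\iota_1=\pi$: in case (ii) this is immediate, since $\bar\pi$ coincides with $T^{-1}\tilde\rho(\cdot)T$ on $\wtil{\fA}_1$; in case (i), for $u=\sum_i u_i^{(1)}\cdots u_i^{(N)}\in\fA^N$ one has $\iota_1(u)=m_{N,1}\bigl(\sum_i u_i^{(1)}\otimes\cdots\otimes u_i^{(N)}\bigr)$, hence $\bar\pi(\iota_1(u))=\hat\pi\bigl(\sum_i u_i^{(1)}\otimes\cdots\otimes u_i^{(N)}\bigr)=\pi(u)$, and square-density plus continuity extend the identity to all of $\fA$. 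The resulting identity, together with density of $\iota_1(\fA)$ in $\wtil{\fA}_1$ and continuity, forces $\bar\pi$ to be multiplicative.

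Finally, applying the Paulsen--Haagerup theorem to the completely bounded homomorphism $\bar\pi$ of operator algebras yields an invertible $S\in\fB(\fH)$ with $\|S\bar\pi(\cdot)S^{-1}\|_{\cb}\leq 1$ and $\|S\|\|S^{-1}\|\leq\|\bar\pi\|_{\cb}\leq Kc^N$; since $\iota_1$ is a complete contraction, $\|S\pi(\cdot)S^{-1}\|_{\cb}\leq 1$, establishing \eqref{eq:simdeg}. I expect the main obstacle to lie in the kernel inclusion of the second paragraph: the two hypotheses require quite different strategies, and the technical density assumption on $\ker m_N$ in case (i) seems indispensable for propagating information from the algebraic tensor product to the Haagerup completion.
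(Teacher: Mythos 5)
Your proof is correct and follows essentially the same route as the paper: in both arguments the density of $\ker m_N$ in $\ker m_{N,1}$ (case (i)) or the a priori similarity (case (ii)) is used to push a completely bounded map defined on $\fA^{N\otimes^h}$ down through the quotient by $\ker m_{N,1}$, the inverse of $\mu_{N,1}$ supplies the factor $K$, square-density transfers the identity $\bar\pi\circ\iota_1=\pi$ from $\fA^N$ to $\fA$, and Paulsen's similarity theorem finishes. The only (cosmetic) difference is that you extend each individual $\pi$ to $\bar\pi$ on $\wtil{\fA}_1$ directly, whereas the paper constructs the universal map $\iota_{1,c}:\wtil{\fA}_1\to\wtil{\fA}_c$ with $\|\iota_{1,c}\|_{\cb}\leq Kc^N$ (via a matricial-ball lifting through Lemma \ref{lem:embedding}) and then defers to the argument of \cite{leess}; your kernel-inclusion formulation of the same step is, if anything, a little cleaner.
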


In (i) we add to \cite[Theorem 1.1]{leess} both the assumption of square-density of $\fA$, and density
of $\ker m_N$ in $\ker m_{N,1}$, and gain the similarity property.  In (ii) we assume the similarity
property, but gain information about {\it completely bounded similarity degree} $d_\cb(\fA)$: 
the smallest $N$ for which (\ref{eq:simdeg}) holds.

In the proof of  \cite[Theorem 1.1]{leess}, it is indicated that $\iota_c\circ\iota_1^{-1}:\iota_1(\fA)\to\wtil{\fA}_c$
extends to a completely bounded map on $\wtil{\fA}_1$.  There is a gap in that proof, which we repair, below.

\begin{proof}[Proof of Theorem \ref{theo:mainrepair}]
We begin with assumptions of (i).
Let $\fK_N=\ker m_N$ and $\fK_{N,1}=\ker m_{N,1}$.
The injectivity of $\iota_1$ provides that
\begin{equation*}
\fK_N=\fA^{N\otimes}\cap\fK_{N,1}.
\end{equation*}
In particular, we may use Lemma \ref{lem:embedding} to regard $\fA^{N\otimes}/\fK_N$ as a dense subspace
of $\fA^{N\otimes^h}/\fK_{N,1}$ with
\begin{equation}\label{eq:murange}
\mu_{N,1}(\fA^{N\otimes}/\fK_N)=m_{N,1}(\fA^{N\otimes})=\iota_1\circ m_N(\fA^{\otimes N})=\iota_1(\fA^N).
\end{equation}
Let $n\in\En$ and $a$ be in the open unit ball 
$\ball_1(\mat_n\otimes \iota_1(\fA^N))\subseteq \ball_1(\mat_n\otimes \wtil{\fA}_1)$.
Our assumptions on $\mu_{N,1}$ provide
a  $T$ in the open $K$-ball $\ball_K(\mat_n\otimes (\fA^{N\otimes^h}/\fK_{N,1}))$,
for which $\id_n\otimes\mu_{N,1}(T)=a$.  Moreover, since $\mu_{N,1}$ is a bijection, 
(\ref{eq:murange}) shows that $T\in\mat_n\otimes(\fA^{N\otimes}/\fK_N)$,
whence Lemma \ref{lem:embedding} and homogeneity provide
a $t \in \ball_K(\mat_n\otimes\fA^{N\otimes})\subseteq \ball_K(\mat_n\otimes\fA^{N\otimes^h})$
for which $T=t+\mat_n\otimes\fK_N$.  

Now we fix $c\geq 1$.  We observe that
\begin{equation}\label{eq:mueq}
\iota_c\circ\iota_1^{-1}\circ m_{N,1}|_{\fA^{N\otimes}}=c^N m_{N,c}|_{\fA^{N\otimes}}
:\fA^{N\otimes}\to\wtil{\fA}_c.
\end{equation}
If $a\in\ball_1(\mat_n\otimes \iota_1(\fA^N))$, choose $T$ and $t$ as above, so
$a=\mu_{N,1}(T)=m_{N,1}(t)$ and we use (\ref{eq:mueq}) to see that
\[
\|\id_n\otimes \iota_c\circ\iota_1^{-1}(a)\|=\|\id_n\otimes \iota_c\circ\iota_1^{-1}\circ m_{N,1}(t)\|
=c^N\|\id_n\otimes m_{N,c}(t)\|\leq Kc^N.
\]
Taking supremum over all such $a$ and all such $n$ yields that 
$\| \iota_c\circ\iota_1^{-1}\|_\cb\leq Kc^N$ on $\iota_1(\fA^N)$.
The assumption of square density of $\fA$ provides that $\fA^N$ in dense in $\fA$, 
whence $\iota_1(\fA^N)$ is dense in $\wtil{\fA}_1$, and thus we find that
$\iota_c\circ\iota_1^{-1}$ extends to a map $\iota_{1,c}:\wtil{\fA}_1\to\wtil{\fA}_c$ with $\|\iota_{1,c}\|_\cb\leq Kc^N$,
as desired.

If we assume (ii), then $\iota_c:\fA\to\wtil{\fA}_c\subseteq\fB(\fH_c)$ is similar to a complete contraction and hence
$\iota_c\circ\iota_1^{-1}$ is automatically completely bounded.  Then we can simplify (\ref{eq:mueq}) above
to $\iota_c\circ\iota_1^{-1}\circ m_{N,1}=c^N m_{N,c}$ on $\fA^{N\otimes^h}$, and gain the same estimate
on $\|\iota_{1,c}\|_\cb$, as above, using only the complete surjectivity of $m_{N,1}$.

The rest of proof follows as in \cite[Theorem 1.1]{leess}.  \end{proof}

\begin{remark}
 In the case where $\fA$ is unital, and hence square-dense, Theorem \ref{theo:mainrepair} with assumptions (i)
is a partial converse to \cite[Theorem 2.5]{pisier}, where Pisier shows that {\it if $\fA$ satisfies
a certain similarity property, then $m_{N,1}:\fA^{N\otimes^h}\to\wtil{\fA}_1$ is a complete surjection}.
Our result with assumptions (ii) is really the aspect of \cite[Theorem 2.5]{pisier} which
begins from the complete surjectivity result to obtain (\ref{eq:simdeg}).  We include it merely for context
and completeness of presentation.

Regrettably, the essential error of the authors in \cite[Theorem 1.1]{leess} is also made in  
\cite[Theorem 4.2.9]{spronkT}.  \end{remark}

\subsection{On applying the corrected theorem to Fourier algebras}
Let $G$ be a locally compact group and $\fal(G)$ its Fourier algebra.  
We used the flawed \cite[Theorem 1.1]{leess} to prove our main theorem \cite[Theorem 2.5]{leess}.
We wish to deduce the latter result from our present Theorem \ref{theo:mainrepair}, instead.

In the proof of \cite[Theorem 2.5]{leess} we successfully
showed, for a quasi-small invariant 
neighbourhood (QSIN) group $G$, that
\begin{equation}\label{eq:cquot}
m=m_{2,1}:\fal(G)\otimes^h\fal(G)\to\fC_0(G)\text{ is a complete quotient map.}
\end{equation}
This was achieved by showing that
\begin{equation}\label{eq:madjisom}
m^*:\meas(G)\to\vn(G)\otimes^{eh}\vn(G)\text{ is a complete isometry}
\end{equation}
It is shown in \cite[Proposition 2.1]{leess}, that completely contractive homomorphisms
are exactly the $*$-homomorphisms for Fourier algebras of QSIN groups, and hence
$\fC_0(G)=\wtil{\fal}(G)_1$.  This gives the first
assumption of our present Theorem \ref{theo:mainrepair}.  In order to apply this theorem to
prove \cite[Theorem 2.5]{leess}, we shall verify the two aspects of
condition (i) of Theorem \ref{theo:mainrepair}.
  To see the first aspect we have that 
\begin{equation}\label{eq:sqdense}
\fal(G)\text{ is square-dense} 
\end{equation}
thanks to the Tauberian theorem of \cite{eymard}.  The second aspect is more delicate.

It is observed in \cite[Section 3]{alaghmandantt} that $\fal(G)\otimes^h\fal(G)$ is a semisimple commutative
Banach algebra, and that $\fal(G\times G)\cong\fal(G)\what{\otimes}\fal(G)$ (operator projective tensor
product) completely contractively embeds in $\fal(G)\otimes^h\fal(G)$ with dense range.  Hence this is a regular
Banach algebra on its Gelfand spectrum $G\times G$.  Given a closed subset $E$ of $G\times G$ we let
\begin{align*}
\ideal_h(E)&=\{u\in\fal(G)\otimes^h\fal(G):u|_E=0\}\text{, and} \\
\ideal^0_h(E) &=\{u\in\ideal_h(G):\supp(u)\cap E=\varnothing\text{ and }\supp u\text{ is compact}\}.
\end{align*}
We say that $E$ is a {\it set of spectral synthesis} if $\wbar{\ideal^0_h(E)}=\ideal_h(E)$.
In the regular algebra $\fal(G)\otimes^h\fal(G)$, this implies that any ideal with vanishing set $E$
is dense in $\ideal_h(E)$.  See, for example, the recent book \cite{kaniuth}.

The following is shown as \cite[Corollary 4.3]{alaghmandancn}, where it was proved in response
to a question to M. Alaghmandan by the authors.  
This result was previously shown for compact groups  in \cite[Proposition 3.1]{rostamis},
and virtually abelian groups in \cite[Corollary 5.4]{alaghmandantt}.
For convenience of the reader, we provide a proof of
this general result, based on (\ref{eq:madjisom}).

\begin{proposition}\label{prop:delspsyn}
Let $G$ be a QSIN group.  Then the diagonal $\Del=\{(s,s):s\in G\}$ is a set of spectral sunthesis
for $\fal(G)\otimes^h\fal(G)$.
\end{proposition}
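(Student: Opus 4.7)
The plan is to prove spectral synthesis of $\Del$ by duality, using (\ref{eq:cquot}) and (\ref{eq:madjisom}). Under the identification of the Gelfand spectrum of $\fal(G) \otimes^h \fal(G)$ with $G \times G$, the map $m = m_{2,1}$ is restriction to the diagonal, so $\ker m = \ideal_h(\Del)$. By (\ref{eq:cquot}), $m$ is a complete quotient, so (\ref{eq:madjisom}) gives $m^*(\meas(G)) = \ideal_h(\Del)^\perp$ as a completely isometric copy of $\meas(G)$ inside $\vn(G) \otimes^{eh} \vn(G) = (\fal(G) \otimes^h \fal(G))^*$. Hahn--Banach then reduces synthesis of $\Del$ to the statement that every $\phi \in \vn(G) \otimes^{eh} \vn(G)$ annihilating $\ideal^0_h(\Del)$ is of the form $m^*\mu$ for some $\mu \in \meas(G)$.

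Given such a $\phi$, I would first pass through the dense contractive inclusion $\fal(G \times G) = \fal(G) \what{\otimes} \fal(G) \hookrightarrow \fal(G) \otimes^h \fal(G)$, whose dual is an embedding $\vn(G) \otimes^{eh} \vn(G) \hookrightarrow \vn(G) \bar\otimes \vn(G) = \vn(G \times G)$. The restriction $\wtil{\phi}$ of $\phi$ to $\fal(G \times G)$ kills every compactly supported element of $\fal(G \times G)$ with support disjoint from $\Del$, so the $\vn(G \times G)$-support of $\wtil{\phi}$ is contained in the closed subgroup $\Del \cong G$. By the classical Takesaki--Tatsuuma theorem that closed subgroups are sets of spectral synthesis for Fourier algebras, $\wtil{\phi}$ annihilates every $w \in \fal(G \times G)$ with $w|_\Del = 0$ and therefore factors through the restriction $\fal(G \times G) \to \fal(\Del) \cong \fal(G)$. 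Hence there is a unique $T \in \vn(G)$ such that, in $\vn(G) \bar\otimes \vn(G)$, $\wtil{\phi}$ is the image of $T$ under the coproduct $\lam(s) \mapsto \lam(s) \otimes \lam(s)$.

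The concluding step---and the main obstacle---is to show that this $T$ actually lies in $\lam(\meas(G)) \subseteq \vn(G)$; once this is done, $\phi = m^*\mu$ will follow by continuity and the density of $\fal(G \times G)$ in $\fal(G) \otimes^h \fal(G)$. I would try to achieve this by exploiting the hypothesis $\phi \in \vn(G) \otimes^{eh} \vn(G)$ quantitatively: since $m$ is a complete quotient, any compactly supported $f \in \fC_0(G)$ lifts to $w \in \fal(G) \otimes^h \fal(G)$ with $\|w\|_h \leq K \|f\|_\infty$, and approximating $w$ by $w_n \in \fal(G \times G)$ gives $\phi(w) = \lim \phi(w_n) = \lim \langle T, m(w_n)\rangle$; this should force a sup-norm bound on the map $f \mapsto \phi(w)$ and thus produce a measure $\mu$ with $T = \lam(\mu)$. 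The delicate point---requiring careful use of (\ref{eq:madjisom})'s QSIN-based construction---is that \emph{a priori} $\phi(w)$ need not depend only on $m(w) = f$; independence of the lift is essentially the synthesis statement we are trying to prove, so it must be recovered during the argument rather than assumed.
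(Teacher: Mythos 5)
Your reduction and your first two steps coincide with the paper's: you correctly identify $\ker m_{2,1}=\ideal_h(\Del)$, so that by the complete quotient property (\ref{eq:cquot}) one has $\ideal_h(\Del)^\perp=m^*(\meas(G))$ and synthesis becomes the inclusion $\ideal^0_h(\Del)^\perp\subseteq m^*(\meas(G))$; you then pass through the contractive injection $\vn(G)\otimes^{eh}\vn(G)\hookrightarrow\vn(G\times G)$ and invoke Takesaki--Tatsuuma to place any annihilator of $\ideal^0_h(\Del)$ inside $\ideal(\Del)^\perp=\vn_\Del(G\times G)\cong\vn(G)$. This is exactly the paper's route up to that point.

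The gap is the final step, which you candidly flag as ``the main obstacle'' but do not close, and your proposed route would not close it: lifting a compactly supported $f\in\fC_0(G)$ to some $w$ with $\|w\|_h\leq K\|f\|_\infty$ and evaluating $\phi(w)$ gives nothing until you know that $\phi(w)$ depends only on $f=m(w)$, i.e.\ that $\phi$ annihilates $\ker m=\ideal_h(\Del)$ --- which is the conclusion you are after, as you yourself observe. The paper escapes this circle by working entirely on the von Neumann algebra side rather than trying to build a bounded functional on $\fC_0(G)$. Since $T$ lies in $\vn_\Del(G\times G)=\lam_\Del(G)''$, the Kaplansky density theorem (applied after decomposing $T$ into self-adjoint parts) produces a net $(\mu_\alp)$ in $\meas(G)$ with $\lam_\Del(\mu_\alp)\to T$ weak* and $\|\lam_\Del(\mu_\alp)\|\leq 2\|T\|$; the isometry (\ref{eq:madjisom}) together with the identification (\ref{eq:madjislam}) converts this operator-norm bound into the bound $\|\mu_\alp\|_{\meas(G)}\leq 2\|T\|$, so a weak* cluster point $\mu\in\meas(G)$ exists and $T=\lam_\Del(\mu)=m^*(\mu)$, whence $T\in\ideal_h(\Del)^\perp$ and the bipolar theorem finishes. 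The missing idea in your write-up is precisely this combination of Kaplansky density with the isometric identity $\|m^*(\mu)\|=\|\mu\|_{\meas(G)}$ (which is where the QSIN hypothesis enters, via (\ref{eq:madjisom})) to manufacture the measure; without it, your $T\in\vn(G)$ could a priori fail to come from $\meas(G)$, and the argument stalls exactly where you say it does.
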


\begin{proof}
The contraction $\fal(G\times G)\hookrightarrow\fal(G)\otimes^h\fal(G)$ gives contractive
injection $\vn(G)\otimes^{eh}\vn(G)\hookrightarrow\vn(G\times G)$.  We let
$\lam_\Del:G\to\vn(G\times G)$ be given by $\lam_\Del(s)=\lam(s,s)$ (left regular representaion)
so $\vn_\Del(G\times G)=\lam_\Del(G)''$ is isomorhic to $\vn(G)$.  Notice that in
$\vn(G)\otimes^{eh}\vn(G)\subseteq\vn(G\times G)$, we have for $\mu$ in $\meas(G)$ that
\begin{equation}\label{eq:madjislam}
m^*(\mu)=\int_G\lam(s)\otimes\lam(s)\,d\mu(s)=\int_G\lam(s,s)\,d\mu(s)=\lam_\Del(\mu)
\end{equation}
where each integral is understood as a weak* integral with respect to the respective predual.

Now let $\ideal(\Del)$ and $\ideal^0(\Del)$ be the ideals in $\fal(G\times G)$, defined analagously
to $\ideal_h(\Del)$ and $\ideal^0_h(\Del)$, above.  Since $\Del$ is a closed subgroup,
\cite{takesakit} provides that $\Del$ is a set of spectral synthesis for $\fal(G\times G)$, and
further, that $\ideal(\Del)^\perp=\vn_\Del(G\times G)$.  Hence in 
$\vn(G)\otimes^{eh}\vn(G)\subseteq\vn(G\times G)$ we have that
\[
\ideal^0_h(\Del)^\perp\subseteq\ideal^0(\Del)^\perp=\ideal(\Del)^\perp=\vn_\Del(G\times G).
\]
Let $T\in\ideal^0_h(\Del)^\perp$.  Then by decomposing into a sum of two self-adjoint operators,
the Kaplansky density theorem provides a net $(\mu_\alp)$ from the space of measures
$\meas(G)$ for which
\[
T=\text{w*-}\lim_\alp\lam_\Del(\mu_\alp)\text{, with each }
\|\lam_\Del(\mu_\alp)\|\leq 2\|T\|\text{ in }\vn_\Del(G\times G).
\]
Combining with (\ref{eq:madjisom}) and (\ref{eq:madjislam}) we thus see that each
\[
\|\mu_\alp\|_{\meas(G)}=\|m^*(\mu_\alp)\|_{\vn(G)\otimes^{eh}\vn(G)}
\leq\|\lam_\Del(\mu_\alp)\|\leq 2\|T\|.
\]
Thus, by dropping to subnet, we may suppose that $\mu=\text{w*-}\lim_\alp\mu_\alp$
exists in $\meas(G)$.  But then $T=\lam_\Del(\mu)$ in $\vn_\Del(G\times G)$, hence,
using again (\ref{eq:madjislam}), $T=m^*(\mu)$ in $\vn(G)\otimes^{eh}\vn(G)$.
Thus, if $u\in\ideal_h(\Del)$, then $\langle T,u\rangle=\int_Gu(s,s)\,d\mu(s)=0$, so
$T\in\ideal_h(\Del)^\perp$.  

In summary, we have shown that $\ideal^0_h(\Del)^\perp\subseteq\ideal_h(\Del)^\perp$, so
the bipolar theorem shows that $\ideal_h(E)\subseteq\wbar{\ideal^0_h(E)}$, which
establishes the equality of these ideals.
\end{proof}

\begin{corollary}\label{cor:denseker}
The multiplication map $m_2:\fal(G)\otimes\fal(G)\to\fal(G)$ satisfies that
$\wbar{\ker m_2}=\ker m_{2,1}$ in $\fal(G)\otimes^h\fal(G)$.
\end{corollary}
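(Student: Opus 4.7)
The plan is to establish $\ker m_{2,1} \subseteq \wbar{\ker m_2}$, the reverse inclusion being immediate since $m_{2,1}$ restricts to $m_2$ on $\fal(G) \otimes \fal(G)$. Proposition \ref{prop:delspsyn} identifies $\ker m_{2,1}$ with $\ideal_h(\Del)$ and asserts $\wbar{\ideal^0_h(\Del)} = \ideal_h(\Del)$, so it suffices to approximate each $u \in \ideal^0_h(\Del)$ by elements of $\ker m_2$ in the $h$-tensor norm.

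The key observation is a support trick: if $f, g \in \fal(G)$ satisfy $fg = 0$, then for any algebraic tensor $w = \sum_j h_j \otimes k_j \in \fal(G) \otimes \fal(G)$ one computes $m_2((f \otimes g) w)(s) = f(s) g(s) \sum_j h_j(s) k_j(s) = 0$, so $(f \otimes g) w \in \ker m_2$. The plan for $u \in \ideal^0_h(\Del)$ with $\supp u = K$ compact in $(G \times G) \setminus \Del$ is then geometric. Cover $K$ by finitely many open rectangles $U_i \times V_i$ with $\wbar{U_i} \cap \wbar{V_i} = \varnothing$ (possible because every $(s,t) \in K$ has $s \neq t$ and $G$ is locally compact Hausdorff), and use regularity of $\fal(G \times G)$ to select a partition of unity $\{\psi_i\} \subseteq \fal(G \times G)$ subordinate to this cover with $\sum_i \psi_i = 1$ on $K$. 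Then $u = \sum_i \psi_i u$ in $\fal(G) \otimes^h \fal(G)$ and each $\supp(\psi_i u) \subseteq U_i \times V_i$.

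For each $i$, use regularity of $\fal(G)$ to pick $f_i, g_i \in \fal(G)$ with $f_i = 1$ on $\wbar{U_i}$, $g_i = 1$ on $\wbar{V_i}$, and $\supp f_i \cap \supp g_i = \varnothing$; then $f_i g_i = 0$ in $\fal(G)$ and $(f_i \otimes g_i)(s,t) = 1$ on $\supp(\psi_i u)$. Approximate $\psi_i u$ by algebraic tensors $w_n \in \fal(G) \otimes \fal(G)$ in the $h$-norm; continuity of multiplication by $f_i \otimes g_i$ in the Banach algebra $\fal(G) \otimes^h \fal(G)$ gives $(f_i \otimes g_i) w_n \to (f_i \otimes g_i) \psi_i u = \psi_i u$ in $h$-norm, with each $(f_i \otimes g_i) w_n \in \ker m_2$ by the support trick. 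Summing over $i$ yields $u \in \wbar{\ker m_2}$.

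The main obstacle is that $u$ is a priori only an $h$-tensor, not a function of compact support lying in the algebraic tensor product, so a plain cut-off by an element of $\fal(G \times G)$ would fail to preserve being an algebraic tensor. The product cut-off $f_i \otimes g_i$ resolves this by doing double duty: it acts as the identity on $\supp(\psi_i u)$ (so the limit of the cut-off approximations is $\psi_i u$) while simultaneously forcing the algebraic tensor approximations into $\ker m_2$ by virtue of $f_i g_i = 0$. The partition of unity in $\fal(G \times G)$ is what reduces the global problem to these local pieces where such a product cut-off is geometrically available.
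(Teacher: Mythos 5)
Your proposal is correct, and its skeleton matches the paper's: both proofs turn on Proposition \ref{prop:delspsyn} (spectral synthesis of $\Del$) together with the easy identifications $\wbar{\ker m_2}\subseteq\ker m_{2,1}=\ideal_h(\Del)$. The difference lies in the remaining step. The paper observes that $\wbar{\ker m_2}$ is a closed ideal of $\fal(G)\otimes^h\fal(G)$ whose vanishing set is $\Del$ (by regularity of $\fal(G)$), and then simply cites the general fact from regular Banach algebra theory that any closed ideal whose hull is a synthesis set $E$ must equal $\ideal_h(E)$. You instead prove the needed inclusion $\ideal^0_h(\Del)\subseteq\wbar{\ker m_2}$ by hand, via a partition of unity in $\fal(G\times G)$ subordinate to a cover of $\supp u$ by rectangles $U_i\times V_i$ missing the diagonal, and the cut-offs $f_i\otimes g_i$ with $f_ig_i=0$; this is essentially the proof of the cited general fact, specialized so that the local elements of the ideal are produced explicitly from the structure of $\ker m_2$. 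Your route is longer but self-contained, makes visible why off-diagonal elements are approximable from $\ker m_2$, and incidentally avoids having to compute that the hull of $\ker m_2$ is exactly $\Del$ (you only use the trivial containment $\ker m_2\subseteq\ideal_h(\Del)$). Two small points: Proposition \ref{prop:delspsyn} does not itself ``identify $\ker m_{2,1}$ with $\ideal_h(\Del)$'' --- that is a separate (easy) observation, which the paper also records separately; and the identities $u=\sum_i\psi_i u$ and $(f_i\otimes g_i)\psi_iu=\psi_iu$ implicitly use the semisimplicity of $\fal(G)\otimes^h\fal(G)$ (noted in \cite{alaghmandantt} and quoted in the paper), which is worth saying explicitly.
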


\begin{proof}
Since $\fK_2=\ker m_2$ is an ideal in $\fal(G)\otimes\fal(G)$, its closure
$\wbar{\fK_2}$ is an ideal in $\fal(G)\otimes^h\fal(G)$.  The regularity of
$\fal(G)$ provides that the vanishing set of $\fK_2$, hence that of $\wbar{\fK_2}$, is $\Del$.
Thus $\wbar{\fK_2}=\ideal_h(\Del)$, by Proposition \ref{prop:delspsyn}.
But $\ideal_h(\Del)=\ker m_{2,1}$.
\end{proof}

The combination of (\ref{eq:cquot}), (\ref{eq:sqdense}) and Corollary \ref{cor:denseker}
give the assumptions Theorem \ref{theo:mainrepair} with condition (i) for $\fal(G)$ with
$G$ a QSIN group.
We hence conclude that \cite[Theorem 2.5]{leess} is true:  {\it if $G$ is a QSIN group, then
any completely bounded homomorphism $\pi:\fal(G)\to\fB(\fH)$ admits an
invertible $S$ in $\fB(\fH)$ for which
\[
S\pi(\cdot)S^{-1}\text{ is a $*$-homomorphism, and }\|S\|\|S^{-1}\|\leq \|\pi\|_{\cb}^2.
\]}

The proofs of results \cite[Corollaries 2.9 \& 2.10]{leess} relied only on \cite[Theorem 2.5]{leess}, as stated, and hence 
remain valid.

\section{On Remark 2.4 (iv) of \cite{leess}}  

This remark is in error, as stated.  The failure of the a
$\Gamma$-space to be amenable, in a sense to
which the authors implicitly appeal, does not imply non-existence of invariant means.

We can partially recover this result, with a simple adaptation of an argument in the preprint \cite{forrestsw}.
We shall use terminology as introduced in \cite{leess}.

Let $\Gamma$ be a dense subgroup of $S=\mathrm{SL}_2(\Ree)$, treated as a discrete group.
Then $G=\Ree^2\rtimes\Gamma$ is not QSIN.  Indeed, as noted in the proof of \cite[Theorem 2.5]{leess},
the QSIN condition would provide an asymptotically inner invariant net $(v_\alp)\subset \bl^1(G)$ which would 
further satisfy
\[
v_\alp\geq 0,\;\int_G v_\alp=1\text{ and }\supp(v_\alp)\searrow\{e\}.
\]
Hence we may suppose that $(v_\alp)$ is supported in the open subgroup
$\Ree^2$, and can be realized as a net in $\bl^1(\Ree^2)$ which satisfies
\[
\|v_\alp\circ\sig-v_\alp\|_1\to 0\text{ for }\sig\text{ in }\Gamma.
\]
We note that as $\Ree^2\setminus\{0\}\cong S/P$ where $P$ is the amenable fixed point subgroup
of any non-zero element of $\Ree^2$.  Hence we regard $(v_\alp)$
as a net of means on the left uniformly continuous functions
$\mathcal{LUC}(S/P)$; any cluster point of this net gives a $\Gamma$-invariant, hence
$S$-invariant mean.  But this violates \cite[\S 3, 1$^\circ$]{eymardM}.

\subsection{Ackowledgments}
The authors are grateful to S.-G. Youn, whose work helped us to uncover the error with \cite[Theorem 1.1]{leess}.
J. Crann kindly pointed out to us the error in \cite[Remark 2.4 (iv)]{leess}.  This work was partially prepared while
the first and third named authors were visiting China.  N.S. is grateful to H. Li, of Chongqing University, and both H.H.L. and N.S. are grateful to Q. Xu, of Harbin Institute of Technology, for their generous hosting, in July of 2018.


Addresses:

\noindent {\sc 
Department of Mathematical Sciences  and Research Institute of Mathematics, Seoul National University,
Gwanak-ro 1, Gwanak-gu, Seoul 08826, Republic of Korea \\ \\
Department of Mathematics and Statistics, University of Saskatchewan,
Room 142 McLean Hall, 106 Wiggins Road
Saskatoon, SK, S7N 5E6  \\ \\
Department of Pure Mathematics, University of Waterloo,
Waterloo, ON, N2L 3G1, Canada.}

\medskip
Email-adresses:
\linebreak
{\tt hunheelee@snu.ac.kr}, {\tt samei@math.usask.ca}, {\tt nspronk@uwaterloo.ca}

\end{document}